\documentclass[a4paper, 12pt]{amsart}

\usepackage[colorlinks,citecolor = red, linkcolor=blue,hyperindex]{hyperref}
\usepackage{euscript,eufrak,verbatim}
\usepackage[psamsfonts]{amssymb}
\usepackage[usenames]{color}
\usepackage[curve]{xypic}
\usepackage{graphicx}
\usepackage{mathrsfs}
\usepackage{calligra,amsmath,amssymb}
\usepackage{float}
\usepackage{bbm}
 \usepackage{color}

\newtheorem{thm}{Theorem}[section]
\newtheorem{lem}[thm]{Lemma}

\newtheorem{prop}[thm]{Proposition}
\newtheorem{cor}[thm]{Corollary}
\newtheorem{defn}[thm]{Definition}
\newtheorem{rem}[thm]{Remark}

\def\R{\mathbb{R}}
\def\N{\mathbb{N}}

\def\E{\mathbb{E}}

\def\O{\mathcal{O}}

\def\X{\mathcal{X}}
\def\M{\mathcal{M}}

\def\C{\mathbb{C}}

\def\PP{\mathbb{P}}

\def\1{\mathbbm{1}}
\def\an {\text{\, and \,}}
\def\tr{\mathrm{tr}}

\def\Mat{\mathrm{Mat}}

\def\diag{\mathrm{diag}}

\def\U{\mathcal{U}}
\def\GS{\mathrm{GS}}
\def\erg{\mathcal{P}_{\mathrm{erg}}}
\def\inv{\mathcal{P}_{\mathrm{inv}}}

\def\XXint#1#2#3{{\setbox0=\hbox{$#1{#2#3}{\int}$ }
\vcenter{\hbox{$#2#3$ }}\kern-.6\wd0}}

\begin{document}

\title[Ergodic measures and infinite matrices of finite rank]{Ergodic measures and infinite matrices of finite rank}

%\date{\today}

\author{Yanqi Qiu}
\address%[authorlabel1]
{Yanqi QIU: CNRS, Institut de Math{\'e}matiques de Toulouse, Universit{\'e} Paul Sabatier, 118 Route de Narbonne, F-31062 Toulouse Cedex 9, France}

\email{yqi.qiu@gmail.com}

%\thanks{}

\begin{abstract}
Let $O(\infty)$ and $U(\infty)$ be the inductively compact infinite orthogonal group and infinite unitary group respectively. The classifications of ergodic probability measures with respect to the natural  group action of $O(\infty)\times O(m)$ on $\mathrm{Mat}(\mathbb{N}\times m, \mathbb{R})$ and  that of $U(\infty)\times U(m)$ on $\mathrm{Mat}(\mathbb{N}\times m, \mathbb{C})$ are due to Olshanski. The original proofs for these results are based on the asymptotic representation theory. In this note, by applying the Vershik-Kerov method, we propose a simple method for obtaining these two classifications,  making it accessible to pure probabilists.  
\end{abstract}

\subjclass[2010]{Primary 37A35; Secondary 60B10, 60B15}
\keywords{inductively compact groups, ergodic measures, Vershik-Kerov ergodic method,  ergodic decomposition}

\maketitle

%\tableofcontents

\setcounter{equation}{0}

\section{Main results}\label{sec-main}
 
 Fix a positive integer $m\in\N$. Set
  \begin{align*}
 \M: = \Mat(\N\times m, \R)= \{ [X_{ij}]_{i\in\N, 1\le j \le m} | X_{ij}\in \R\}. 
 \end{align*}
Let $O(\infty)$ be the inductive limit group of the chain $$O(1) \subset O(2) \subset \cdots$$ of compact orthogonal groups. Equivalently, $O(\infty)$ is the group of infinite orthogonal matrices $u= [u_{ij}]$ such that $u_{ij} = \delta_{ij}$ when $i + j$ is large enough. Consider the natural group action of $O(\infty) \times O(m)$ on  $\M$ defined by 
\begin{align}\label{def-action}
((u, v), X) \mapsto u X v^{-1}, \quad u \in O(\infty), v \in O(m),  X \in \M. 
\end{align}
Let $\mathcal{P}_{\mathrm{erg}}(\M)$ denote the set of ergodic   $O(\infty) \times O(m)$-invariant Borel probability measures on $\M$, equipped with the induced weak topology.

Let $G : = [g_{ij}]_{i\in \N, 1 \le j \le m}$ be an infinite Gaussian random matrices on $\M$ such that $g_{ij}$'s are independent standard real Gaussian random variables. Let $\O$ be a random matrix sampled uniformly from $O(m)$ and independent of $G$.  
Define 
\begin{align*}
\Delta: =\{s = (s_1, \cdots, s_m) | s_1\ge \cdots\ge s_m \ge 0\}.
\end{align*}
 For any $s \in \Delta$, define $\mu_s$ as the probability distribution of the following random matrices 
$$
G\cdot \diag(s_1, \cdots, s_m) \cdot \O.
$$

\begin{thm}[{Olshanski \cite{Ol78, Ol84}}]\label{thm-r}
The map $s \mapsto \mu_s$ defines a homeomorphism between $\Delta$ and $\mathcal{P}_{\mathrm{erg}}(\M)$. 
\end{thm}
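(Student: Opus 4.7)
The plan is to invoke the Vershik--Kerov ergodic method: for a $\nu\in\erg(\M)$ and a $\nu$-typical $X\in\M$, the orbital measures $\mathrm{orb}_n(X)$---the distributions of $UXV^{-1}$ with Haar-distributed $U\in O(n)\subset O(\infty)$ and independent Haar $V\in O(m)$---converge weakly to $\nu$ as $n\to\infty$. Computing these limits explicitly identifies $\nu$ as some $\mu_s$, while routine checks show each $\mu_s$ is ergodic and that $s\mapsto\mu_s$ is a continuous injection.

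First I would verify that $\mu_s$ is $O(\infty)\times O(m)$-invariant---clear from left-invariance of the Gaussian law and bi-invariance of the Haar measure on $O(m)$---and ergodic. For ergodicity, conditionally on $\mathcal O$ the rows of $G\cdot\diag(s_1,\ldots,s_m)\cdot\mathcal O$ are i.i.d.\ Gaussian, so any $O(\infty)$-invariant function is a.s.\ a function of $\mathcal O$ alone by Kolmogorov's $0$--$1$ law; $O(m)$-invariance then forces it to be constant. Continuity of $s\mapsto\mu_s$ is immediate. For injectivity, the strong law yields, under $\mu_s$, that $\tfrac{1}{n}X_{[n]}^T X_{[n]}\to \mathcal O^T\diag(s_1^2,\ldots,s_m^2)\mathcal O$ a.s.\ (writing $X_{[n]}$ for the first $n$ rows of $X$); the ordered eigenvalues of the right-hand side are $s_1^2\ge\cdots\ge s_m^2$, so $s$ is recovered from $\mu_s$.

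The main step is surjectivity. Fix $\nu\in\erg(\M)$ and a $\nu$-typical $X$. Since $O(\infty)$ contains all finite row permutations, the rows $(X_i)_{i\ge 1}$ are exchangeable. The $O(\infty)$-ergodic decomposition of $\nu$ forms a single $O(m)$-orbit of $O(\infty)$-ergodic components, each with trivial tail $\sigma$-algebra, so by de Finetti the rows in each such component are i.i.d.\ from some law $P$ on $\R^m$. The strong law then gives $\tfrac{1}{n}X_{[n]}^T X_{[n]}\to\Sigma:=\Ex_{P}[\xi\xi^T]$ a.s.; letting $s_1\ge\cdots\ge s_m\ge 0$ be the ordered square roots of the eigenvalues of $\Sigma$ (these are $O(m)$-invariants, hence constant on the $O(m)$-orbit), the singular values $s_j^{(n)}(X)$ of $X_{[n]}$ satisfy $s_j^{(n)}/\sqrt{n}\to s_j$ a.s. To identify $\nu$, restrict $\mathrm{orb}_n(X)$ to the first $N$ rows ($N\le n$): via the SVD of $X_{[n]}$, this restriction is the law of $W_{[1:N,\,1:m]}\cdot\diag(s_1^{(n)}(X),\ldots,s_m^{(n)}(X))\cdot V^{-1}$, with $W$ Haar on $O(n)$ and $V$ Haar on $O(m)$. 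The classical convergence of $\sqrt{n}\cdot W_{[1:N,\,1:m]}$ to an $N\times m$ standard Gaussian matrix, combined with $s_j^{(n)}/\sqrt{n}\to s_j$, gives weak convergence of this projection to the first $N$ rows of $G\cdot\diag(s_1,\ldots,s_m)\cdot V^{-1}\sim\mu_s$. Since $N$ is arbitrary and finite-dimensional projections determine the measure, $\nu=\mu_s$.

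The main anticipated obstacle is making the Vershik--Kerov step $\mathrm{orb}_n(X)\Rightarrow\nu$ fully rigorous for $\nu$-a.e.\ $X$; this is where one must exploit the inductively compact structure of $O(\infty)\times O(m)$ together with a martingale/backwards-filtration argument. A secondary technical point is the passage from $O(\infty)\times O(m)$-ergodicity to the structure of the $O(\infty)$-ergodic components: one must show that the $O(m)$-orbit of the covariance $\Sigma$ is precisely captured by inserting the Haar-random factor $\mathcal O$ into the definition of $\mu_s$, which follows because the $O(m)$-orbit of $\Sigma$ under $\Sigma\mapsto V\Sigma V^T$ is parameterized exactly by its unordered spectrum $\{s_1^2,\ldots,s_m^2\}$.
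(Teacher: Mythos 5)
Your overall architecture---Vershik--Kerov orbital measures plus Borel's theorem on truncations of Haar orthogonal matrices---matches the paper's, but your two key sub-arguments diverge from it, one successfully and one not. The successful divergence is ergodicity: conditioning on $\O$, noting that an $O(\infty)$-invariant event is exchangeable in the i.i.d.\ rows of $G D_s v$ (so the relevant tool is the Hewitt--Savage zero-one law, not Kolmogorov's---these are not tail events), and then using right-invariance in $v\in O(m)$ to make the conditional probability constant, is a legitimate direct proof. The paper never proves ergodicity of $\mu_s$ directly: it combines Bufetov's a priori ergodic decomposition (Theorem \ref{Buf-thm}) with the mutual singularity of the $\mu_s$'s (Proposition \ref{prop-sing}) to force the decomposing measure in \eqref{s-dec-s} to be $\delta_{s_0}$. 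Your route is more elementary and self-contained on this point.

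The genuine gap is in surjectivity, at the claim that the $O(\infty)$-ergodic components of $\nu$ have i.i.d.\ rows, whence $\tfrac1n X_{[n]}^T X_{[n]}\to\Sigma$ a.s.\ with $\Sigma$ finite. De Finetti applied to an exchangeable measure gives rows that are i.i.d.\ only \emph{conditionally} on the directing measure $P$, and $O(\infty)$-ergodicity does not collapse this: the de Finetti components $P^{\otimes\infty}$ need not be $O(\infty)$-invariant, so the $S_\infty$-ergodic decomposition genuinely refines the $O(\infty)$-ergodic one and you cannot conclude triviality of the exchangeable $\sigma$-algebra from $O(\infty)$-ergodicity (that would essentially be assuming the theorem). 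Moreover, even granting conditionally i.i.d.\ rows, nothing yet excludes $\Ex_P[\|\xi\|^2]=\infty$, in which case $s_1^{(n)}(X)/\sqrt{n}\to\infty$ and your normalized singular values have no finite limit. This is precisely what the paper's proof of Proposition \ref{prop-incl} is built to handle: it makes no probabilistic assumption on the base point $X$, expresses the restricted orbital measure through the singular values $s^{(n)}$ of $X_{[n]}/\sqrt{n}$ as in \eqref{cv-fourier}, and proves the Claim $\sup_n s_1^{(n)}<\infty$ by showing that otherwise the limiting characteristic function would vanish for all $\lambda_1>0$, contradicting that the limit is a probability measure. You need either that tightness argument or an explicit proof that divergence of $s_1^{(n)}/\sqrt{n}$ is incompatible with weak convergence of the orbital measures. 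A smaller omission: you establish that $s\mapsto\mu_s$ is a continuous injection but never address continuity of the inverse (the paper's Proposition \ref{prop-tight} and Corollary \ref{cor-homeo}), which is required for the statement ``homeomorphism.''
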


\begin{rem}
In the simplest case where $m=1$,  Theorem \ref{thm-r} reduces to the well-known Schoenberg theorem. 
\end{rem}

Similarly,  let $U(\infty)$ be  the inductive limit of the chain $$U(1) \subset U(2) \subset \cdots$$ of compact unitary groups and consider similar action as \eqref{def-action} of $U(\infty)\times U(m)$ on $\M^{\C} :   = \Mat(\N\times m , \C)$. Let $\mathcal{P}_{\mathrm{erg}}(\M^\C)$ denote the set of ergodic   $U(\infty) \times U(m)$-invariant Borel probability measures on $\M^\C$, equipped with the induced weak topology.

Let $G_{\C} = [g_{ij}^{\C}]_{i\in \N, 1\le j \le m}$ be an infinite Gaussian random matrices on $\M_\C$ such that $g^\C_{ij}$'s are independent standard complex  Gaussian random variables. Let $\mathcal{U}$ be a random matrix sampled uniformly from $U(m)$ and independently of $G_\C$. For any $s \in \Delta$, define $\mu_s^\C$ as the probability distribution of the following random matrices 
$$
G_\C \cdot \diag(s_1, \cdots, s_m) \cdot \U.
$$

\begin{thm}[{Olshanski \cite{Ol78, Ol84}}]\label{thm-c}
The map $s \mapsto \mu_s^\C$ defines a homeomorphism between $\Delta$ and $\mathcal{P}_{\mathrm{erg}}(\M_\C)$. 
\end{thm}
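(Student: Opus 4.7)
The plan is to establish Theorem \ref{thm-c} in close parallel with Theorem \ref{thm-r}, via the Vershik-Kerov ergodic method. The foundation is that any $U(N) \times U(m)$-invariant Borel probability measure on $\Mat(N \times m, \C)$ is uniquely determined (through the singular value decomposition) by the joint distribution of the $m$ singular values $s^{(N)} = (s_1^{(N)} \ge \cdots \ge s_m^{(N)})$. Hence a $U(\infty) \times U(m)$-invariant measure $\mu$ on $\M^\C$ is encoded by the consistent family $\{\nu_N\}_{N \ge m}$ of laws of singular values of the upper blocks $X^{(N)} := [X_{ij}]_{i \le N,\, j \le m}$.

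First, each $\mu_s^\C$ is $U(\infty) \times U(m)$-invariant (from the unitary invariance of the entries of $G_\C$) and ergodic. For ergodicity, I would condition on $\U = v$: the conditional law has i.i.d.\ complex Gaussian rows, and any $U(\infty)$-invariant event is tail-measurable (since $u \in U(\infty)$ alters only finitely many rows) and hence trivial by Kolmogorov's $0$--$1$ law; integrating over Haar $\U$ then handles the $U(m)$ action by the transitivity of its right-action on itself.

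Next, for an arbitrary $\mu \in \erg(\M^\C)$, apply de Finetti's theorem to the subgroup $S_\infty \subset U(\infty)$ acting by row permutations: conditional on an invariant $\sigma$-algebra $\mathcal{F}_{\mathrm{dF}}$, the rows $X_{1,:}, X_{2,:}, \ldots$ are i.i.d.\ random vectors in $\C^m$. The stronger $U(\infty)$-invariance combined with a Bernstein-type argument (applying a $2 \times 2$ orthogonal block in $U(\infty)$ yields $(X_{1,:}+X_{2,:})/\sqrt{2} \stackrel{d}{=} X_{1,:}$) forces the conditional row distribution to be centered complex Gaussian with a random covariance $\Sigma \in \Herm(m)$, positive semidefinite. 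By the conditional strong law of large numbers,
\begin{equation*}
\frac{(X^{(N)})^* X^{(N)}}{N} = \frac{1}{N} \sum_{i=1}^N X_{i,:}^* X_{i,:} \longrightarrow \Sigma \quad \mu\text{-a.s.}
\end{equation*}
The eigenvalues of $\Sigma$ lie in the $U(\infty) \times U(m)$-invariant $\sigma$-algebra and hence are $\mu$-a.s.\ constants $s_1^2 \ge \cdots \ge s_m^2$; equivalently $s^{(N)}/\sqrt{N} \to s \in \Delta$ almost surely. Writing $\Sigma = v^* \diag(s^2) v$ for a random $v \in U(m)$, the $U(m)$-invariance of $\mu$ forces $v$ to be Haar-distributed. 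Using the classical Gaussian limit of rescaled Haar samples on the complex Stiefel manifold (i.e.\ $\sqrt{N} W_N$ converges in distribution to the standard complex Gaussian on any finite row projection), one obtains
\begin{equation*}
X^{(N)} = (\sqrt{N} W_N) \diag\bigl(s^{(N)}/\sqrt{N}\bigr) V_N^* \Longrightarrow G_\C \cdot \diag(s) \cdot \U
\end{equation*}
in distribution on every finite row projection, where $W_N$ is Haar on the set of $N \times m$ matrices with orthonormal columns and $V_N$ is Haar on $U(m)$, independent conditionally on $s^{(N)}$. This matches $\mu_s^\C$ on all finite-dimensional marginals, so $\mu = \mu_s^\C$. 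Injectivity of $s \mapsto \mu_s^\C$ is automatic from the a.s.\ recovery $s = \lim s^{(N)}/\sqrt{N}$, while continuity in both directions follows from the explicit construction together with tightness on $\Delta$.

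The main obstacle I anticipate is the Bernstein/Maxwell-type step identifying the conditional row distribution as centered complex Gaussian: this uses the full strength of $U(\infty)$-invariance (beyond row permutations alone) and requires a careful adaptation of the characterization of Gaussian laws by stability under random rotations to the complex vector-valued setting. The rest is standard probability theory, and the complex case of Theorem \ref{thm-c} then runs entirely parallel to the real case of Theorem \ref{thm-r}, with $O(\infty)$ replaced by $U(\infty)$, real Gaussians by complex ones, and Bernstein's characterization adapted to the complex setting.
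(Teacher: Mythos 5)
Your outline is essentially correct, but it follows a genuinely different route from the paper's. The paper proves only the real case in detail and declares the complex case analogous; its strategy is: (a) show the $\mu_s^\C$ are mutually singular via the strong law of large numbers applied to $\tr\big(\left[C_n(X)^*C_n(X)/n\right]^k\big)$; (b) show, via Borel's theorem on truncations of Haar unitaries, that every Vershik--Kerov limit of orbital measures is some $\mu_s^\C$, whence $\erg(\M^\C)\subset\{\mu_s^\C\}$ by Vershik's theorem; (c) deduce ergodicity of each $\mu_s^\C$ from Bufetov's a priori ergodic decomposition combined with the mutual singularity. You instead prove ergodicity directly (conditioning on $\U$, invoking a $0$--$1$ law for the i.i.d.\ rows, then averaging over $U(m)$ using the $U(m)$-invariance of the event) and obtain the classification by de Finetti plus a Freedman/Bernstein characterization of rotatable sequences. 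Both routes work; yours replaces the orbital-measure computation and Bufetov's theorem by classical probabilistic symmetries (essentially Freedman's theorem on rotatable sequences in vector-valued form), which is arguably more elementary but concentrates all the difficulty in the Gaussian characterization step --- exactly the step the paper's method is designed to avoid.

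Two points need repair or expansion. First, a $U(\infty)$-invariant event is \emph{not} tail-measurable in general: invariance under maps that alter finitely many rows does not place an event in $\bigcap_N\sigma(X_{N,:},X_{N+1,:},\dots)$. What you actually have is invariance under finite row permutations, so the correct tool is the Hewitt--Savage $0$--$1$ law for exchangeable events of an i.i.d.\ sequence, not Kolmogorov's. With that substitution your ergodicity argument is complete, and it is in fact shorter than the paper's. Second, the identification of the conditional row law as a \emph{centered complex} Gaussian is the crux and is only sketched: besides the real $2\times 2$ rotation giving $(X_{1,:}+X_{2,:})/\sqrt2\stackrel{d}{=}X_{1,:}$ (which, applied to all real-linear functionals, yields a centered real Gaussian on $\R^{2m}$), you must also use the diagonal phase matrices $\diag(e^{i\theta},1,1,\dots)\in U(\infty)$ to force circular symmetry, i.e.\ a genuinely complex Gaussian with Hermitian covariance $\Sigma$; and the passage from the unconditional distributional identity to the conditional (directing) measure requires the standard but nontrivial argument in Freedman's theorem. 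These are known results, so there is no fatal gap, but as written the proposal leaves its hardest step as a citation-shaped hole.
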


\begin{rem}
The reader is also referred to \cite{RV-Lie} for a recent related work on Olshanski spherical functions for infinite dimensional motion groups of fixed rank. 
\end{rem}

{\flushleft \bf Comments on the proof of Theorems \ref{thm-r} and \ref{thm-c}.}  

The proof of Theorem \ref{thm-c} is similar to that of Theorem \ref{thm-r}. Only the proof  of Theorem \ref{thm-r} will be detailed in this note.

In the case of  bi-orthogonally or bi-unitarily invariant measures on the space $\Mat(\N \times \N, \R)$ or $\Mat(\N\times\N, \C)$,  the ergodicity of an invariant measure is equivalent to the so-called Ismagilov-Olshanski multiplicativity of its Fourier transform, in particular, the ergodicity can be derived using the classical De Finetti Theorem from Ismagilov-Olshanski multiplicativity of its Fourier transform, see \cite{OkOl} and a  recent application of this method in \cite{BQ-sym} in non-Archimedean setting.   

However,  in our situations, there does not seem to be an analogue of  Ismagilov-Olshanski multiplicativity for the Fourier transforms of ergodic measures $\mu_s$ or $\mu_s^\C$. The proofs of the ergodicity for the measures $\mu_s$ or $\mu_s^\C$ require a new method. Two main ingredients for proving the ergodicity of $\mu_s$ are:  the mutual singularity between all measures $\mu_s$'s (derived from the strong law of large numbers) and an a priori ergodic decomposition formula due to Bufetov for invariant Borel probability measures with respect to a fixed  action of inductively compact group. 

Our method can also be applied to give a probably simpler proof, by avoiding the Harish-Chandra--Izykson-Zuber orbital integrals,  of the Olshanski and Vershik's approach to Pickrell's classification of unitarily ergodic Borel probability measures on the space of infinite Hermitian matrices. This part of work will be detailed elsewhere.  

\medskip

This research is supported by the grant IDEX UNITI-ANR-11-IDEX-0002-02, financed by Programme ``Investissements d'Avenir'' of the Government of the French Republic managed by the French National Research Agency.

%%%%%%%%%
%%%%%%%%%
%%%%%%%%%
%%%%%%%%%
%%%%%%%%%
%%%%%%%%%
%%%%%%%%%
%%%%%%%%%
%%%%%%%%%
%%%%%%%%%
%%%%%%%%%
%%%%%%%%%
%%%%%%%%%
%%%%%%%%%

\section{Preliminaries}\label{sec-pre}
\subsection{Notation}

Let $\X$ be a  Polish space. Denote by $\mathcal{P}(\X)$ the set of Borel probability measures on $\X$.  Let $G$ be a topological group and let $G$ acts on $\X$ by homeomorphisms. Let $\mathcal{P}_{\mathrm{inv}}^{G}(\X)$ denote the set of $G$-invariant Borel probability measures on $\X$. Recall that a measure $\mu \in \mathcal{P}_{\mathrm{inv}}^{G}(\X)$  is called  ergodic,  if  for any $G$-invariant Borel subset $\mathcal{A} \subset \mathcal{X}$, either $\mu(\mathcal{A}) =0$ or $\mu(\X\setminus \mathcal{A}) = 0$. Let $\mathcal{P}_{\mathrm{erg}}^{G}(\X)$ denote the set of ergodic $G$-invariant Borel probability measures on $\X$.  If the group action  is clear from the context, we also use the simplified notation $\mathcal{P}_{\mathrm{inv}}(\X)$ and $\mathcal{P}_{\mathrm{erg}}(\X)$. 

A sequence $(\mu_n)_{n\in\N}$ in $\mathcal{P}(\X)$ is said to converge weakly to $\mu\in \mathcal{P}(\X)$, and denoted by $\mu_n \Longrightarrow \mu$, if for any bounded continuous function $f$ on $\X$, we have 
\begin{align*}
\lim_{n\to\infty}\int_{\X} f d\mu_n  = \int_{\X} f d\mu.
\end{align*}   

Given any random variable $Y$, we denote by $\mathcal{L}(Y)$ its distribution.

Let $\M(\infty)$ be the subset of $\M$ consisting of matrices $X\in \M$ whose all but a finite number of entries vanish.  Let $\mu \in \mathcal{P}(\M)$, its Fourier transform is defined on $\M(\infty)$ by 
\begin{align*}
\widehat{\mu}(B)=\int_{\M} e^{i\cdot \tr (B^*\cdot X)} d\mu(X), \quad B \in \M(\infty).
\end{align*}

In what follows, for simplifying notation, for  $\lambda = (\lambda_1, \cdots, \lambda_m)\in \Delta$, we denote 
\begin{align}\label{def-D}
D_\lambda: = \diag(\lambda_1, \cdots, \lambda_m).
\end{align} 
When it is necessary, we also  identify $D_\lambda$ with an element of $\M(\infty)$ by adding infintely many $0$'s to make it a matrix in $\M(\infty)$

\begin{rem}\label{rem-diag}
Any $B\in \M(\infty)$ can be written in the form: 
\begin{align*}
B   =   u \cdot  \left[  \begin{array}{cccc}  \lambda_1& 0 & \cdots & 0 \\ 0& \lambda_2&  \cdots & 0  \\ \vdots& \vdots&  \ddots & \vdots  \\ 0&  0&  \cdots &  \lambda_m \\ 0&  0&  \cdots &  0 \\ \vdots&  \vdots&  \vdots &  \vdots\end{array}  \right]\cdot v,   
\end{align*}
where $ \lambda_1\ge \cdots \ge \lambda_m \ge 0$ and $u \in O(\infty), v\in O(m)$. 
 \end{rem}

The proof of the following lemma is elementary and is omitted here. 
\begin{lem}\label{lem-diag}
If $\mu\in\mathcal{P}_{\mathrm{inv}}(\M)$, then for $\mu$ is uniquely determined by $\widehat{\mu} (D_\lambda)$, where $\lambda$ ranges over $\Delta$.  

Given a sequence $(\mu_n)_{n\in\N}$ in $\inv(\M)$ and an element $\mu_\infty\in \inv(\M)$. The weak convergence $\mu_n \Longrightarrow \mu_\infty$ is equivalent to the uniform convergence $\widehat{\mu}_n(D_\lambda) \rightarrow \widehat{\mu}_\infty(D_\lambda)$  on compact subsets of $\Delta$. 
\end{lem}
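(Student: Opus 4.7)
The plan is to reduce everything to classical finite-dimensional Fourier analysis, using the bi-invariance of $\mu$ together with the singular value decomposition recalled in Remark \ref{rem-diag}.

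For the first assertion, I would first observe that $\widehat{\mu}|_{\M(\infty)}$ already determines $\mu$. Indeed, the pushforward $(\pi_k)_*\mu$ under the truncation $\pi_k:\M\to\Mat(k\times m,\R)$ has characteristic function equal to the restriction of $\widehat{\mu}$ to matrices supported in the first $k$ rows, so classical finite-dimensional Fourier uniqueness determines $(\pi_k)_*\mu$ for every $k$, and Kolmogorov consistency then recovers $\mu$ on the product space $\M$. Next, the identity $\tr(B^* u X v^{-1}) = \tr((u^T B v)^* X)$ over $\R$ (using $v^{-1}=v^T$) combined with the $O(\infty)\times O(m)$-invariance of $\mu$ yields $\widehat{\mu}(B) = \widehat{\mu}(u^T B v)$ for all $u\in O(\infty)$ and $v\in O(m)$. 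By Remark \ref{rem-diag}, each $B\in\M(\infty)$ factors as $u\,D_\lambda\,v$ with $\lambda\in\Delta$, so $\widehat{\mu}(B) = \widehat{\mu}(D_\lambda)$, and the first assertion follows.

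For the continuity statement, $\mu_n\Longrightarrow\mu_\infty$ on the product space $\M$ is equivalent to $(\pi_k)_*\mu_n\Longrightarrow(\pi_k)_*\mu_\infty$ for every $k$. By Levy's continuity theorem in finite dimensions, together with the SVD reduction of the previous paragraph applied to each $\widehat{\mu}_n$, this is in turn equivalent to pointwise convergence $\widehat{\mu}_n(D_\lambda)\to\widehat{\mu}_\infty(D_\lambda)$ for every $\lambda\in\Delta$.

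The only step warranting comment is the upgrade from pointwise to uniform convergence on compact subsets of $\Delta$, and this is not a genuine obstacle. It follows from the classical equicontinuity of characteristic functions of a tight family: for any $R>0$ one has
$$
|\widehat{\mu}_n(D_\lambda)-\widehat{\mu}_n(D_{\lambda'})| \le 2\mu_n\bigl\{\max_{1\le j\le m}|X_{jj}|>R\bigr\} + R\sqrt{m}\,\|\lambda-\lambda'\|,
$$
and the first term is uniformly small in $n$ once $R$ is large, since a weakly convergent sequence of probability measures is tight. Pointwise convergence combined with this equicontinuity then yields uniform convergence on compact subsets of $\Delta$.
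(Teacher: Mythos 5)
Your proof is correct and complete; the paper itself omits the proof of this lemma as ``elementary,'' and your argument (reduction to finite-dimensional marginals via truncation, the identity $\widehat{\mu}(B)=\widehat{\mu}(u^TBv)$ combined with the singular value decomposition of Remark \ref{rem-diag}, L\'evy continuity, and the tightness-based equicontinuity estimate to upgrade pointwise to locally uniform convergence) is exactly the standard argument the author presumably had in mind. No gaps.
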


\subsection{Ergodic measures for inductively compact groups}\label{sec-erg}

Here we briefly recall the Vershik-Kerov ergodic method.  Let $K(\infty)$ be the inductive limit of a chain $K(1) \subset K(2) \subset \cdots \subset K(n) \subset \cdots$ of compact metrizable groups.  Let  $K(\infty)$ act on  a Polish space $\X$ by homeomorphisms.

 Let $m_{K(n)}$ denote the normalized Haar measure on $K(n)$. Given a point $x\in \X$, let $m_{K(n)}(x)$ denote the image of $m_{K(n)}$ under the mapping $u 􏰁\mapsto u\cdot x$ from $K(n)$ to $\X$. 
 
\begin{defn}[Limit Orbital Measures]\label{def-L}
Define $\mathscr{L}^{K(\infty)} (\X)$ ($\mathscr{L}(\X)$ in short) the subset of Borel probability measures on $\X$, such that there exists a point $x\in \X$ and $m_{K(n)}(x) \Longrightarrow \mu$. 
\end{defn}
It is clear that $\mathscr{L} (\X)\subset \mathcal{P}_{\mathrm{inv}}(\X)$.

\begin{thm}[{Vershik \cite[Theorem 1]{Vershik-inf-group}}]\label{Ver-thm}
The following inclusion holds: 
$$
\mathcal{P}_{\mathrm{erg}}(\X) \subset \mathscr{L} (\X). 
$$ 
\end{thm}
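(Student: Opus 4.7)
The plan is to apply a reverse martingale argument to the decreasing chain of $\sigma$-algebras coming from the tower $K(1) \subset K(2) \subset \cdots \subset K(\infty)$. For each $n$, let $\mathcal{F}_n$ denote the $\sigma$-algebra of Borel $K(n)$-invariant subsets of $\X$. These form a decreasing sequence with
$$
\mathcal{F}_\infty := \bigcap_{n\ge 1} \mathcal{F}_n,
$$
and since $K(\infty) = \bigcup_n K(n)$, a Borel set is $K(\infty)$-invariant if and only if it lies in $\mathcal{F}_\infty$. Thus ergodicity of $\mu\in\erg(\X)$ is equivalent to $\mathcal{F}_\infty$ being $\mu$-trivial.

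Fix $\mu \in \erg(\X)$ and a bounded continuous function $f$ on $\X$. I claim that the conditional expectation $\E_\mu[f \mid \mathcal{F}_n]$ can be represented pointwise by the orbital average
$$
T_n f(x) := \int_{K(n)} f(u\cdot x)\, dm_{K(n)}(u) = \int_\X f\, d m_{K(n)}(x).
$$
Indeed $T_n f$ is $K(n)$-invariant (by left-invariance of $m_{K(n)}$) and continuous in $x$, hence $\mathcal{F}_n$-measurable; and for any bounded $\mathcal{F}_n$-measurable function $g$, the $K(n)$-invariance of $\mu$ together with Fubini yields
$$
\int_{\X} g(x) T_n f(x)\, d\mu(x) = \int_{K(n)} \int_{\X} g(x) f(u\cdot x)\, d\mu(x)\, dm_{K(n)}(u) = \int_{\X} g f\, d\mu,
$$
using $g(x) = g(u\cdot x)$ for $u\in K(n)$. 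Hence $T_n f = \E_\mu[f\mid \mathcal{F}_n]$ $\mu$-almost surely.

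By Doob's reverse martingale convergence theorem, $T_n f \to \E_\mu[f\mid\mathcal{F}_\infty]$ both $\mu$-a.s.\ and in $L^1(\mu)$. Ergodicity of $\mu$ makes $\mathcal{F}_\infty$ $\mu$-trivial, so the limit equals the constant $\int f\, d\mu$. Therefore there is a $\mu$-null set $N_f \subset \X$ outside of which
$$
\lim_{n\to\infty} \int_{\X} f\, d m_{K(n)}(x) = \int_{\X} f\, d\mu.
$$

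To conclude, fix a countable family $\{f_k\}_{k\ge 1}$ of bounded continuous functions on the Polish space $\X$ that is convergence-determining for the weak topology on $\mathcal{P}(\X)$ (for instance, a countable dense subset of the bounded Lipschitz functions with respect to a compatible bounded metric). Let $N = \bigcup_k N_{f_k}$; then $\mu(N) = 0$, and for every $x \in \X \setminus N$ the orbital measures $m_{K(n)}(x)$ converge weakly to $\mu$. Since $\mu(\X\setminus N) = 1 > 0$, at least one such $x$ exists, which shows $\mu \in \mathscr{L}(\X)$. The only subtle step is the identification $T_n f = \E_\mu[f\mid \mathcal{F}_n]$ $\mu$-a.s., which is where the $K(n)$-invariance of $\mu$ is genuinely used; the rest of the argument is a standard combination of reverse martingale convergence and the Portmanteau theorem.
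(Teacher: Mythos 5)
The paper does not prove this statement -- it is quoted from Vershik's 1974 paper -- so there is no internal proof to compare against. Your reverse-martingale argument is correct and complete, and it is in fact the standard (essentially Vershik's original) proof: the identification $T_nf=\E_\mu[f\mid\mathcal F_n]$, the backward martingale convergence along the decreasing filtration of $K(n)$-invariant sets, the triviality of $\mathcal F_\infty=\bigcap_n\mathcal F_n$ under ergodicity (valid because $K(\infty)=\bigcup_n K(n)$), and the passage to a countable convergence-determining class on the Polish space $\X$ are all handled properly. The only implicit hypothesis worth flagging is joint continuity (or at least joint Borel measurability) of the action map $K(n)\times\X\to\X$, which is needed for $T_nf$ to be Borel; this is part of the standing assumptions in the Vershik--Kerov--Bufetov framework the paper works in.
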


We will also need an a priori ergodic decomposition formula due to Bufetov.  
\begin{thm}[{Bufetov \cite[Theorem 1]{Bufetov-erg-dec}}]\label{Buf-thm}
The set $\mathcal{P}_{\mathrm{inv}}(\X)$ is a Borel subset of $\mathcal{P}(\X)$.  For any $\nu\in\mathcal{P}_{\mathrm{inv}}(\X)$, there exists a  Borel probability $\overline{\nu}$ on $\mathcal{P}_{\mathrm{erg}}(\X)$ such that 
\begin{align}\label{erg-dec}
\nu  = \int\limits_{\mathcal{P}_{\mathrm{erg}}(\X)} \eta \,  d\overline{\nu} (\eta).  
\end{align}
\end{thm}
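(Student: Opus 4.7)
The plan is to construct the decomposition measure $\overline{\nu}$ as the pushforward of $\nu$ under an a.e.-defined map $x\mapsto \eta_x$, where $\eta_x$ is the weak limit of the Haar orbital measures $m_{K(n)}(x)$. The key is to recognize these orbital averages as conditional expectations along the decreasing filtration of $K(n)$-invariant sets and then apply the reverse martingale convergence theorem.

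For the Borel measurability of $\inv(\X)$ in $\mathcal{P}(\X)$, since $K(\infty)=\bigcup_n K(n)$ is separable and $\mathcal{P}(\X)$ carries a metrizable weak topology (as $\X$ is Polish), $\mu\in\inv(\X)$ if and only if $\int (f\circ k)\,d\mu=\int f\,d\mu$ for every $f$ in a countable family $(f_j)\subset C_b(\X)$ determining the weak topology and every $k$ in a countable dense subset of $K(\infty)$. Each such equality cuts out a Borel subset of $\mathcal{P}(\X)$, and so does their countable intersection.

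Now fix $\nu\in\inv(\X)$ and let $\mathcal{I}_n$ be the $\sigma$-algebra of $K(n)$-invariant Borel subsets of $\X$. Because the chain $K(1)\subset K(2)\subset\cdots$ is increasing, $\mathcal{I}_1\supset\mathcal{I}_2\supset\cdots$ is a decreasing filtration with tail $\mathcal{I}_\infty:=\bigcap_n\mathcal{I}_n$ equal to the $K(\infty)$-invariant $\sigma$-algebra. A Fubini computation using the $K(n)$-invariance of both $\nu$ and of any $A\in\mathcal{I}_n$ gives, for every bounded Borel $f$,
\[
\mathbb{E}_\nu[f\mid\mathcal{I}_n](x)=\int_{K(n)} f(k\cdot x)\,dm_{K(n)}(k)=\int f\,dm_{K(n)}(x)\quad\text{for $\nu$-a.e.\ }x.
\]
Doob's reverse martingale convergence theorem then yields $\nu$-a.e.\ (and $L^1(\nu)$) convergence of the right-hand side to $\mathbb{E}_\nu[f\mid\mathcal{I}_\infty](x)$. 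Running this along a countable family $(f_j)\subset C_b(\X)$ determining the weak topology of $\mathcal{P}(\X)$, outside a single $\nu$-null set the orbital measures converge weakly, $m_{K(n)}(x)\Longrightarrow \eta_x$; the limit lies in $\inv(\X)$ because $m_{K(n)}(x)$ is $K(m)$-invariant for all $n\ge m$. Setting $\overline{\nu}$ to be the law of $x\mapsto \eta_x$ under $\nu$, the identity $\mathbb{E}_\nu\mathbb{E}_\nu[\,\cdot\,\mid\mathcal{I}_\infty]=\mathbb{E}_\nu$ applied to each $f_j$ then gives \eqref{erg-dec}.

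The main obstacle is showing ergodicity of $\eta_x$ for $\nu$-a.e.\ $x$. For any fixed $K(\infty)$-invariant Borel $A$, $\mathbf{1}_A$ is $\mathcal{I}_\infty$-measurable, so
\[
\eta_x(A)=\mathbb{E}_\nu[\mathbf{1}_A\mid \mathcal{I}_\infty](x)=\mathbf{1}_A(x)\in\{0,1\}\quad\text{for $\nu$-a.e.\ }x,
\]
but the exceptional null set a priori depends on $A$. To obtain a single null set valid simultaneously for all invariant Borel $A$, I would fix a countable algebra generating the Borel $\sigma$-algebra of $\X$, verify the identity above along this algebra, and extend by a monotone class argument. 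This final step relies decisively on $\X$ being Polish and $K(\infty)$ being separable.
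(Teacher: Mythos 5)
The paper does not prove this statement --- it is quoted from Bufetov \cite[Theorem 1]{Bufetov-erg-dec} --- so there is no internal proof to compare against; your reverse-martingale strategy is nevertheless the standard route to this result and close in spirit to the cited source. The Borel measurability of $\inv(\X)$, the identity $\E_\nu[f\mid\mathcal{I}_n](x)=\int f\,dm_{K(n)}(x)$, and the reconstruction $\nu=\int\eta_x\,d\nu(x)$ are all sound as you set them up. One repairable omission: from the $\nu$-a.e.\ convergence of $\int f_j\,dm_{K(n)}(x)$ for a countable determining family $(f_j)\subset C_b(\X)$ you conclude weak convergence to a probability measure $\eta_x$; since $\X$ is Polish but not compact, mass may escape, and you must first prove that $(m_{K(n)}(x))_n$ is tight for $\nu$-a.e.\ $x$. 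This can be done by inner regularity of $\nu$: pick compacts $K_\epsilon$ with $\nu(K_\epsilon)>1-\epsilon$, apply the reverse martingale to $\1_{K_\epsilon}$ (the conditional-expectation identity holds for bounded Borel functions, not only continuous ones), and run a Borel--Cantelli argument along $\epsilon_k\downarrow 0$.

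The genuine gap is the ergodicity of $\eta_x$, which you rightly flag as the main obstacle: your proposed fix does not work. You suggest verifying $\eta_x(A)=\1_A(x)$ along a countable algebra generating the Borel $\sigma$-algebra of $\X$ and extending by a monotone class argument. But that identity holds only for \emph{invariant} $A$; the members of a countable generating algebra of the full Borel $\sigma$-algebra are not invariant, and for non-invariant $A$ the quantity $\eta_x(A)=\E_\nu[\1_A\mid\mathcal{I}_\infty](x)$ is in general not $\{0,1\}$-valued, so there is nothing to feed into a monotone class argument. If you instead take a countable generator of $\mathcal{I}_\infty$, the problem is that $\mathcal{I}_\infty$ is countably generated only modulo $\nu$-null sets, and replacing an invariant $A$ by a $\nu$-equivalent set reintroduces an $A$-dependent exceptional null set --- exactly the quantifier swap you were trying to eliminate. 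The standard resolutions are different: for instance, show that for $\nu$-a.e.\ $x$ one has $\eta_y=\eta_x$ for $\eta_x$-a.e.\ $y$ together with $\eta_x=\int\eta_y\,d\eta_x(y)$, deduce that $\eta_x$ is an extreme point of $\inv(\X)$, and then prove that extreme points are ergodic. As written, your argument establishes the decomposition of $\nu$ into \emph{invariant} measures, but not that $\overline{\nu}$ is carried by $\erg(\X)$, which is the actual content of the theorem.
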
  
\begin{rem}
Here the equality \eqref{erg-dec} means that for any Borel subset $\mathcal{A} \subset \X$, we have 
\begin{align}\label{dec-meaning}
\nu(\mathcal{A})  = \int\limits_{\mathcal{P}_{\mathrm{erg}}(\X)} \eta (\mathcal{A})   d\overline{\nu} (\eta).  
\end{align}
\end{rem}

\subsection{Haar random matrices from $O(N)$ or $U(N)$}

\subsubsection{How to sample Haar random matrices from $O(N)$ or $U(N)$?}\label{sec-haar}
We need the following well-known simple results.  Let $N\in\N$ be a fixed positive integer.  Let $G_N= (g_{ij})_{1\le i, j \le N}$ be a random $N\times N$ real matrix such that the entries $g_{ij}$'s are i.i.d standard real Gaussian random variables.  Similarly,  denote $G_N^{\C}$ the complex random matrix with i.i.d standard complex Gaussian random variables. 

For any $N\times N$ square real or complex matrix $A$,  let $\GS(A)$ be the matrix obtained from $A$ by doing the Gram-Schmidt orthogonalization procedure with respect to the columns of $A$.

Note that $\GS(G_N)\in O(N)$ almost surely. Moreover, for any given orthogonal matrix $O\in O(N)$, we have 
\begin{align*}
\GS(O\cdot G_{N}) = O\cdot \GS(G_N). 
\end{align*}

\begin{prop}\label{prop-haar}
The random matrix $\GS(G_N), \GS(G_N^{\C})$ are Haar random matrices from $O(N)$ and $U(N)$ respectively.  
\end{prop}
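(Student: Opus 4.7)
The plan is to exploit the rotational invariance of the standard Gaussian measure together with the equivariance property of Gram--Schmidt already recorded in the excerpt. The proof uses only the uniqueness of Haar measure on a compact group.

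First I would observe that the distribution of $G_N$ is invariant under left multiplication by orthogonal matrices, i.e., $\mathcal{L}(O\cdot G_N) = \mathcal{L}(G_N)$ for every $O\in O(N)$. This is a standard fact: the columns of $G_N$ are independent standard Gaussian vectors in $\mathbb{R}^N$, and the standard Gaussian measure on $\mathbb{R}^N$ is orthogonally invariant, so left multiplication by $O$ preserves the joint distribution of the columns.

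Next, note that $G_N$ has invertible columns almost surely (the set of non-invertible matrices has Lebesgue measure zero), so $\GS(G_N)$ is well-defined and lies in $O(N)$ with probability one. Combining the equivariance $\GS(O\cdot G_N) = O\cdot \GS(G_N)$ with the Gaussian invariance from the previous step, I would deduce that for every $O\in O(N)$,
\begin{align*}
\mathcal{L}(O\cdot \GS(G_N)) = \mathcal{L}(\GS(O\cdot G_N)) = \mathcal{L}(\GS(G_N)).
\end{align*}
Thus the law of $\GS(G_N)$ is a Borel probability measure on the compact group $O(N)$ that is invariant under left translation by $O(N)$. By the uniqueness of left Haar measure on a compact group, this distribution must equal $m_{O(N)}$.

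The complex case is entirely analogous: the standard complex Gaussian measure on $\mathbb{C}^N$ is unitarily invariant, so $\mathcal{L}(U\cdot G_N^{\mathbb{C}}) = \mathcal{L}(G_N^{\mathbb{C}})$ for $U\in U(N)$, and the same equivariance $\GS(U\cdot G_N^{\mathbb{C}}) = U\cdot \GS(G_N^{\mathbb{C}})$ holds (the Gram--Schmidt procedure is column-wise and commutes with any left action by an isometry). Applying uniqueness of Haar measure on $U(N)$ concludes the argument. There is essentially no obstacle here; the only point worth double-checking is that $G_N$ (resp.\ $G_N^{\mathbb{C}}$) has linearly independent columns with probability one, so that the Gram--Schmidt map is well-defined on a set of full measure.
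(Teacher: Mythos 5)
Your proof is correct and follows essentially the same route as the paper: left-invariance of the Gaussian law, the equivariance $\GS(O\cdot G_N)=O\cdot\GS(G_N)$, and uniqueness of Haar measure on a compact group. You merely make explicit two points the paper leaves implicit (almost sure invertibility and the appeal to uniqueness of Haar measure), which is fine.
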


\begin{proof}
It suffices to prove that  the distribution $\GS(G_N)$ of  is invariant under left action by $O(N)$.  Fix  $O\in O(N)$.  By the invariance of Gaussian measure on $\R^N$, we have $O \cdot G_N \stackrel{d}{=} G_N$. This yields the desired equality 
\begin{align*}
O\cdot \GS(G_N)   = \GS(O\cdot G_{N})\stackrel{d}{=} \GS(G_{N}).
\end{align*}

Similar argument yields  that $\GS(G_N^{\C})$ is a Haar random matrix from $U(N)$.
\end{proof}

\subsubsection{Asymptotic of trunctations}\label{sec-trunc}

Let $Z^{(N)}$ and $Z_\C^{(N)}$ be a Haar random matrix from $O(N)$ and  $U(N)$ respectively. For any positive integer $S\le N$, we denote 
by $Z^{(N)}[S]$ and $Z_\C^{(N)}[S]$  the truncated upper-left $S\times S$ corner of $Z^{(N)}$ and $Z_\C^{(N)}$ respectively, that is, 
\begin{align*}
Z^{(N)}[ S]: =[Z^{(N)}_{ij}]_{1 \le i, j \le S} \an Z_\C^{(N)}[ S]: =[(Z_\C^{(N)})_{ij}]_{1 \le i, j \le S}. 
\end{align*}
Let 
$$
G^{(S)} = [g_{ij}]_{ 1\le i, j \le S} \an G_\C^{(S)} = [g_{ij}^{\C}]_{1\le i, j \le S}.
$$
where $g_{ij}$ (resp. $g^\C_{ij}$) are independent standard normal  real (resp. complex) random variables.

The following well-known result will be useful. 
\begin{prop}[Borel Theorem]\label{prop-trun}
As $N$ goes to infinity, the following weak convergences hold: 
\begin{align*}
\mathcal{L}( \sqrt{N} \cdot Z^{(N)}[S] ) \Longrightarrow G^{(S)} \an \mathcal{L}( \sqrt{N} \cdot Z_\C^{(N)}[S] ) \Longrightarrow G_\C^{(S)}. 
\end{align*}
\end{prop}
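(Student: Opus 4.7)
The plan is to exploit the Gram-Schmidt realization from Proposition \ref{prop-haar} and read off the upper-left corner $Z^{(N)}[S]$ from a QR-type factorization of $G_N$. Writing the Gram-Schmidt procedure for the columns $g_1,\dots,g_N$ of $G_N$ in matrix form yields a factorization $G_N = Z^{(N)} R^{(N)}$, where $R^{(N)}$ is an $N\times N$ upper triangular matrix with positive diagonal and entries
\begin{align*}
R^{(N)}_{ij} = \langle g_j, v_i\rangle \ \ (i<j), \qquad R^{(N)}_{jj} = \Big\| g_j - \sum_{i<j}\langle g_j, v_i\rangle v_i\Big\|,
\end{align*}
where $v_1,\dots,v_N$ denote the columns of $Z^{(N)}$. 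Let $R^{(N)}[S]$ be the upper-left $S\times S$ block of $R^{(N)}$. Reading off the upper-left $S\times S$ block of the identity $G_N = Z^{(N)} R^{(N)}$ and using that $R^{(N)}$ is upper triangular, I obtain
\begin{align*}
G^{(S)} = Z^{(N)}[S]\cdot R^{(N)}[S], \qquad \text{and hence} \qquad \sqrt{N}\cdot Z^{(N)}[S] = G^{(S)}\cdot \left(R^{(N)}[S]/\sqrt{N}\right)^{-1}.
\end{align*}

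Next I would analyze the joint law of the entries of $R^{(N)}[S]$. For each fixed $j\le S$, the vectors $v_1,\dots,v_{j-1}$ depend only on $g_1,\dots,g_{j-1}$ and are therefore independent of $g_j$. Conditionally on $(v_1,\dots,v_{j-1})$, rotational invariance of the standard Gaussian law on $\R^N$ implies that the random variables $\langle g_j, v_i\rangle$ for $i<j$ are i.i.d.\ standard Gaussians, while $(R^{(N)}_{jj})^2$ equals the squared norm of the projection of $g_j$ onto a deterministic subspace of dimension $N-j+1$, so $(R^{(N)}_{jj})^2 \sim \chi^2_{N-j+1}$. Consequently, as $N\to\infty$, $R^{(N)}_{ij}/\sqrt{N}\to 0$ for $i<j$ and $R^{(N)}_{jj}/\sqrt{N}\to 1$, both in probability, so $R^{(N)}[S]/\sqrt{N}\to I_S$ in probability.

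Finally, continuity of matrix inversion at $I_S$ gives $(R^{(N)}[S]/\sqrt{N})^{-1}\to I_S$ in probability, and Slutsky's lemma yields the desired weak convergence $\sqrt{N}\cdot Z^{(N)}[S]\Longrightarrow G^{(S)}$. The complex case is handled identically, with $R^{(N)}_{ij}$ a standard complex Gaussian for $i<j$ and $(R^{(N)}_{jj})^2$ following a chi-squared law with $2(N-j+1)$ real degrees of freedom; both still concentrate around the same limits after division by $\sqrt{N}$. I expect the only mildly delicate point to be the bookkeeping of the joint distribution of the entries of $R^{(N)}[S]$, but this follows cleanly from the rotational invariance of the Gaussian law together with the fact that each $v_i$ is a function of $g_1,\dots,g_i$ alone.
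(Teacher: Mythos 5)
Your argument is correct. Note that the paper itself gives no proof of Proposition \ref{prop-trun}: it is stated as the classical Borel theorem and simply cited as well known, so there is nothing to compare against line by line. What you supply is a clean, self-contained derivation that dovetails naturally with the paper's own setup in \S\ref{sec-haar}: you realize $Z^{(N)}=\GS(G_N)$ via Proposition \ref{prop-haar}, read off the QR factorization $G_N=Z^{(N)}R^{(N)}$, observe that upper triangularity of $R^{(N)}$ makes the corner identity $G^{(S)}=Z^{(N)}[S]\cdot R^{(N)}[S]$ exact, and then reduce everything to $R^{(N)}[S]/\sqrt{N}\to I_S$ in probability plus Slutsky. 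The distributional bookkeeping for $R^{(N)}$ (conditional on $v_1,\dots,v_{j-1}$, the coefficients $\langle g_j,v_i\rangle$ are i.i.d.\ standard Gaussians and $(R^{(N)}_{jj})^2$ is the squared norm of a projection onto an $(N-j+1)$-dimensional subspace, hence $\chi^2_{N-j+1}$) is right, and the application of Slutsky is legitimate because the second factor converges to the deterministic matrix $I_S$, so no independence between $G^{(S)}$ and $R^{(N)}[S]$ is needed. Two points worth making explicit: $R^{(N)}[S]$ is a.s.\ invertible (positive diagonal), which justifies writing $(R^{(N)}[S]/\sqrt{N})^{-1}$; and in the complex case the law of $(R^{(N)}_{jj})^2$ is $\tfrac12\chi^2_{2(N-j+1)}$ under the usual convention $\E|g^{\C}_{ij}|^2=1$ (your statement omits the factor $\tfrac12$), though this does not affect the conclusion $R^{(N)}_{jj}/\sqrt{N}\to 1$. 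This QR route is arguably the most economical proof available given that the paper has already introduced the Gram--Schmidt sampling of Haar matrices; the standard alternative is to compute the density of the truncated block directly and pass to the limit, which is heavier.
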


%%%%%%%%%
%%%%%%%%%
%%%%%%%%%
%%%%%%%%%
%%%%%%%%%
%%%%%%%%%
%%%%%%%%%
%%%%%%%%%
%%%%%%%%%
%%%%%%%%%
%%%%%%%%%
%%%%%%%%%
%%%%%%%%%
%%%%%%%%%

\section{Classification of $\mathcal{P}_{\mathrm{erg}}(\M)$}
\subsection{Singularity between $\mu_s$'s}

Recall that two Borel  probability measures $\sigma_1$ and $\sigma_2$ on $\X$ are called singular to each other, if there exists a Borel subset $\mathcal{A}\subset \X$ such that $\sigma_1(\mathcal{A}) = 1 - \sigma_2(\mathcal{A})=1$. 
\begin{prop}\label{prop-sing}
Probability measures from the family $\{\mu_s: s \in \Delta\}$ are mutually singular.  In particular, all the measures $\mu_s$'s are distinct. 
\end{prop}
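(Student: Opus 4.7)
The plan is to extract, $\mu_s$-almost surely, the parameter $s$ from a sample $X \in \M$ via a measurable statistic built from finite truncations, which immediately yields mutual singularity.

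More precisely, let $X^{(N)}$ denote the upper $N \times m$ truncation of $X \in \M$, and define the $\M$-measurable $m \times m$ matrix
$$
A_N(X) := \frac{1}{N} (X^{(N)})^{T} X^{(N)}.
$$
Let $\Phi(X) = (\Phi_1(X), \ldots, \Phi_m(X)) \in \Delta$ be obtained by taking, if the limit exists, the square roots of the eigenvalues of $\lim_{N\to\infty} A_N(X)$, arranged in decreasing order. This is a Borel map on its domain of definition. The first step is to check that for $X \sim \mu_s$, the limit exists almost surely and equals $D_s^2$ conjugated by a unitary, so that $\Phi(X) = s$ almost surely.

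For this, I write $X = G D_s \mathcal{O}$ with $G$ an infinite matrix of i.i.d.\ standard real Gaussians and $\mathcal{O} \in O(m)$ independent of $G$. Then
$$
A_N(X) = \mathcal{O}^{T} D_s \left( \frac{1}{N} (G^{(N)})^{T} G^{(N)} \right) D_s \mathcal{O},
$$
where $G^{(N)}$ is the top $N \times m$ block of $G$. By the strong law of large numbers applied coordinatewise to the $m^2$ entries of $\frac{1}{N} (G^{(N)})^{T} G^{(N)}$, this matrix converges almost surely to the identity $I_m$. Hence almost surely
$$
\lim_{N\to\infty} A_N(X) = \mathcal{O}^{T} D_s^{2} \mathcal{O},
$$
whose eigenvalues are exactly $s_1^2 \ge \cdots \ge s_m^2$. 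Therefore $\Phi(X) = s$ for $\mu_s$-almost every $X$.

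To conclude, fix $s \ne s'$ in $\Delta$ and set $\mathcal{A}_s := \{X \in \M : \Phi(X) = s\}$, which is a Borel subset of $\M$. The preceding step gives $\mu_s(\mathcal{A}_s) = 1$, while $\mathcal{A}_s \cap \mathcal{A}_{s'} = \emptyset$ forces $\mu_{s'}(\mathcal{A}_s) = 0$. Hence $\mu_s \perp \mu_{s'}$, and all the $\mu_s$ are distinct. The only nontrivial point in this plan is the almost sure convergence $\frac{1}{N}(G^{(N)})^{T} G^{(N)} \to I_m$, which is the classical strong law of large numbers since each entry is a normalized sum of i.i.d.\ random variables with finite mean; no genuine obstacle is expected.
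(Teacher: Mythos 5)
Your proof is correct and follows essentially the same route as the paper: both use the strong law of large numbers to show that $\frac{1}{N}C_N(X)^{*}C_N(X)$ converges $\mu_s$-a.s.\ to $\O^{*}D_s^{2}\O$ and then recover $s$ from this limit to build disjoint Borel sets of full measure. The only (cosmetic) difference is that you read off $s$ via the ordered eigenvalues of the limit matrix, whereas the paper uses the traces $\tr\bigl(\bigl[\tfrac{1}{n}C_n(X)^{*}C_n(X)\bigr]^{k}\bigr)$ together with the elementary fact that the power sums $\bigl(\sum_i s_i^{2k}\bigr)_{k}$ determine $s$.
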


\begin{rem}\label{rem-inj}
The map $s  \mapsto (\sum_{i =1}^m s_i^{2k})_{k\in\N}$ from $\Delta$ to $\R^\N$ is injective.   Indeed, first we have 
\begin{align*}
s_1 = \lim_{k\to\infty} (\sum_{i=1}^m s_i^{2k})^{1/2k}. 
\end{align*}
Then, the sequence $(\sum_{i =2}^m s_i^{2k})_{k\in\N}$ is known and so is $s_2$. Continue this procedure, we see that the sequence  $(\sum_{i =1}^m s_i^{2k})_{k\in\N}$ determines $s$ uniquely. 
\end{rem}

For any $n\in\N$, let $$C_n: \M \rightarrow  \Mat(n \times m, \R)$$ be the map sending any $X \in \M$ to its upper $n\times m$-corner. Equivalently, 
$$
C_n(X) = \diag(\underbrace{1, \cdots, 1}_{\text{$n$ times}},  0, \cdots)\cdot X. 
$$
In particular, we have $C_n(XY) = C_n(XY)$ once $XY$ is well-defined.

\begin{prop}\label{prop-as}
For any $s\in\Delta$ and any $k\in\N$, we have 
\begin{align}\label{tr-as}
\lim_{n\to\infty}\tr  \Big(  \left[\frac{( C_n(X))^* C_n(X)}{n}\right]^k \Big) = \sum_{i=1}^m s_i^{2k}, \text{ for $\mu_s$-a.e. $X\in \M$.}
\end{align}
\end{prop}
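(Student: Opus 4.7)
The plan is to exploit the explicit construction of $\mu_s$ as the law of $G \cdot D_s \cdot \O$ and reduce the claim to the strong law of large numbers. First, by definition we may realize $X \sim \mu_s$ as $X = G D_s \O$ where $G \in \M$ has i.i.d.\ standard real Gaussian entries and $\O$ is Haar on $O(m)$, independent of $G$. Since $C_n$ is left-multiplication by the projection onto the first $n$ rows, we have
\[
C_n(X) = C_n(G) \cdot D_s \cdot \O = G_n \cdot D_s \cdot \O,
\]
where $G_n$ denotes the upper $n \times m$ block of $G$, i.e.\ an $n \times m$ matrix of i.i.d.\ standard Gaussians.

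Next I would use the cyclic property of the trace and the fact that $\O \O^T = I_m$ to eliminate the orthogonal factor:
\[
\tr\!\left(\!\left[\tfrac{C_n(X)^* C_n(X)}{n}\right]^{k}\right)
= \tr\!\left(\!\left[\O^T D_s \tfrac{G_n^T G_n}{n} D_s \O\right]^{k}\right)
= \tr\!\left(\!\left[D_s \tfrac{G_n^T G_n}{n} D_s\right]^{k}\right).
\]
Thus the claim reduces to an asymptotic statement about the fixed-size $m \times m$ matrix $M_n := D_s (G_n^T G_n / n) D_s$.

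The key observation is that the entries of $G_n^T G_n / n$ are given by $(G_n^T G_n / n)_{ij} = \frac{1}{n}\sum_{\ell=1}^{n} g_{\ell i} g_{\ell j}$, which are empirical averages of i.i.d.\ random variables with mean $\E[g_{1i} g_{1j}] = \delta_{ij}$. By Kolmogorov's strong law of large numbers applied to each of the finitely many pairs $(i,j) \in \{1,\dots,m\}^2$, we obtain $G_n^T G_n / n \to I_m$ entrywise almost surely, hence in operator norm almost surely (as the matrix is of fixed size $m$). Therefore $M_n \to D_s^2$ almost surely in operator norm, which implies $M_n^k \to D_s^{2k}$ and consequently $\tr(M_n^k) \to \tr(D_s^{2k}) = \sum_{i=1}^m s_i^{2k}$ almost surely.

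There is no serious obstacle: the only things to be careful about are (i) making sure the reduction via $\O$ is valid (straightforward by cyclicity), and (ii) justifying that entrywise SLLN convergence of the fixed-size matrix $G_n^T G_n/n$ passes to convergence of traces of powers, which is immediate by continuity of matrix multiplication and trace on $\Mat(m \times m, \R)$.
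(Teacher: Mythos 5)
Your proposal is correct and follows essentially the same route as the paper: reduce to $\O^* D_s (C_n(G)^*C_n(G)/n) D_s \O$, apply the strong law of large numbers entrywise to the fixed-size matrix $C_n(G)^*C_n(G)/n \to I_m$, and pass to the limit in the trace of the $k$-th power. The only cosmetic difference is that you remove $\O$ up front by cyclicity of the trace, while the paper keeps it and uses $(\O^* D_s^2 \O)^k = \O^* D_s^{2k}\O$ at the end.
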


\begin{proof}
Since $C_n(GD_s\O)  = C_n(G) D_s \O$, we have 
\begin{align*}
C_n(G D_s \O))^* C_n(GD_s\O) = \O^* D_sC_n(G)^* C_n(G)D_s  \O.
\end{align*}
The random matrix $C_n(G)^* C_n(G)$ is of size $n\times n$. Let $1\le i, j \le n$, then the $(i, j)$-entry of $C_n(G)^* C_n(G)$  is 
\begin{align*}
[C_n(G)^* C_n(G)]_{ij} = \sum_{l=1}^n g_{li} g_{lj}. 
\end{align*}
By the strong  law of large numbers, we have  
\begin{align*}
\frac{[C_n(G)^* C_n(G)]_{ij}}{n} \xrightarrow[n\to\infty]{a.s.}\delta_{ij}. 
\end{align*} 
It follows that 
\begin{align*}
\left[  \frac{\O^* D_sC_n(G)^* C_n(G)D_s  \O}{n}\right]^k\xrightarrow[n\to\infty]{a.s.} (\O^* D_s^2  \O)^k  = \O^* D_s^{2k} \O. 
\end{align*}
As a consequence, we have
\begin{align}\label{rm-as}
\tr  \Big(  \left[  \frac{\O^* D_sC_n(G)^* C_n(G)D_s  \O}{n}\right]^k \Big) \xrightarrow[n\to\infty]{a.s.}  \tr (D_s^{2k}) = \sum_{i=1}^m s_i^{2k}. 
\end{align}
By the definition of $\mu_s$,   \eqref{rm-as} implies the desired assertion  \eqref{tr-as}. 
\end{proof}

\begin{proof}[Proof of Proposition \ref{prop-sing}]
For any $s\in \Delta$, we define a subset $\mathcal{A}_s\subset\M$ by 
\begin{align}\label{def-As}
\mathcal{A}_s: = \left\{ X \in \M \Big| \lim_{n\to\infty}\tr  \Big(  \left[\frac{( C_n(X))^* C_n(X)}{n}\right]^k \Big) = \sum_{i=1}^m s_i^{2k}, \forall k \in \N\right\}. 
\end{align}
By Remark \ref{rem-inj}, the Borel subsets $\mathcal{A}_s$'s are mutually disjoint.  Proposition \ref{prop-as} implies that  $\mu_s(\mathcal{A}_s) = 1$ for all $s \in \Delta$.  This implies that all the measures $\mu_s$'s are mutually singular. 
\end{proof}

\subsection{Tightness condition}
\begin{prop}\label{prop-tight}
Let $(s^{(n)})_{n\in\N}$ be a sequence  in $\Delta$. Then the sequence $(\mu_{s^{(n)}})_{n\in\N}$ is tight if and only if $\sup_{n\in\N} s_1^{(n)} < \infty.$
\end{prop}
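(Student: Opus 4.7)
The plan is to reduce the tightness of $(\mu_{s^{(n)}})$ on the countable product $\M = \R^{\N\times m}$ (equipped with the product topology) to the tightness of each one-dimensional entry marginal. The ``only if'' direction of this reduction is immediate since coordinate projections are continuous; the ``if'' direction is a standard diagonal argument: given $\epsilon>0$ and tight marginals, I would choose $M_{ij}$ with $\PP(|X_{ij}|>M_{ij})<\epsilon\cdot 2^{-(i+j)}$ uniformly in $n$, so that the Tychonoff-compact box $\prod_{i,j}[-M_{ij},M_{ij}]$ carries mass at least $1-\epsilon$.

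With this reduction in hand, a short calculation using $X=GD_{s^{(n)}}\O$, independence of $G$ and $\O$, and the Haar-invariance identity $\E[\O_{kj}^2]=1/m$ gives
\[
\E_{\mu_{s^{(n)}}}[X_{ij}^2]=\frac{1}{m}\sum_{k=1}^m (s_k^{(n)})^2,
\]
a quantity squeezed between $(s_1^{(n)})^2/m$ and $(s_1^{(n)})^2$. The sufficient direction is then immediate: if $\sup_n s_1^{(n)}<\infty$, the second moments of all entries are uniformly bounded, so Chebyshev combined with the reduction above yields tightness of $(\mu_{s^{(n)}})$.

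For the necessary direction, I would suppose $\sup_n s_1^{(n)}=\infty$ and, after extracting a subsequence, that $s_1^{(n)}\to\infty$; I will show that the marginal of the single entry $X_{11}$ already fails to be tight. Conditioning on $v=\O_{\bullet 1}$, the first column of $\O$, which is Haar-uniform on the sphere $S^{m-1}$, the variable $X_{11}=\sum_k g_{1k}s_k^{(n)}v_k$ is a centred Gaussian with variance $\sum_k (s_k^{(n)})^2 v_k^2\ge (s_1^{(n)})^2 v_1^2$. Since $\E[v_1^2]=1/m$, the event $\{v_1^2\ge 1/(2m)\}$ has probability at least some $p_m>0$ independent of $n$; on this event the conditional standard deviation diverges with $n$, so for every fixed $M>0$,
\[
\liminf_{n\to\infty}\PP_{\mu_{s^{(n)}}}(|X_{11}|>M) \ge p_m,
\]
which contradicts tightness of the $X_{11}$-marginal.

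The main obstacle lies in this necessary direction: a sequence of probability measures may have unbounded second moments yet remain tight (for instance $(1-\tfrac1n)\delta_0+\tfrac1n\delta_n$), so a moment bound alone will not do. The conditioning on the Haar-uniform column $\O_{\bullet 1}$ is the key device, converting the divergence of $s_1^{(n)}$ into a uniform-in-$n$ positive lower bound on $\PP(|X_{11}|>M)$ for every $M$, which is what genuinely rules out tightness.
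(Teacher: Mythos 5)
Your proof is correct, but it takes a genuinely different route from the paper's. For sufficiency, the paper observes from the explicit formula for $\widehat{\mu}_s(D_\lambda)$ that $s\mapsto\mu_s$ is continuous, so that $\{\mu_s : s_1\le \sup_n s_1^{(n)}\}$ is the continuous image of a compact set and hence tight; you instead compute second moments entrywise and invoke Chebyshev together with the product-topology reduction, which is equally valid (the topology on $\M$ is indeed the product topology) and arguably more self-contained. For necessity, the paper first extracts a weakly convergent subsequence $\mu_{s^{(n_k)}}\Longrightarrow\mu$ via Prokhorov, computes $\widehat{\mu}_{s^{(n_k)}}(D_\lambda)\le \E[\exp(-\lambda_1^2(s_1^{(n_k)})^2\O_{11}^2/2)]\to 0$, and derives a contradiction with $\widehat{\mu}(0)=1$ via continuity of the characteristic function; you work directly with the definition of tightness, conditioning on the Haar-uniform first column of $\O$ to get a uniform-in-$n$ lower bound $\liminf_n \PP(|X_{11}|>M)\ge p_m>0$ for every $M$. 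Your version avoids both Prokhorov's theorem and Fourier transforms, and correctly identifies (and circumvents) the pitfall that unbounded second moments alone do not preclude tightness; the only steps you leave implicit --- that $\PP(v_1^2\ge 1/(2m))$ is bounded below (a Paley--Zygmund-type estimate using $\E[v_1^2]=1/m$ and $v_1^2\le 1$) and that $\PP(|N(0,\sigma^2)|>M)$ is increasing in $\sigma$ --- are routine. The paper's Fourier-analytic route has the advantage of reusing the machinery of Lemma~\ref{lem-diag} that drives the rest of the argument, but yours is the more elementary probabilistic proof.
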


\begin{proof}
Note that for any $s, \lambda \in \Delta$, we have 
\begin{align*}
\widehat{\mu}_s(D_\lambda)  = \E[\exp(i \cdot \tr (D_\lambda C_m(G) D_s \O) ]. 
\end{align*}
From this, it is clear that $s \mapsto \mu_s$ is continuous. Thus the compactness of the set $\{s \in \Delta| s_1 \le \sup_{n\in\N} s_1^{(n)} \}$  implies the tightness of  the sequence $(\mu_{s^{(n)}})_{n\in\N}$.

Conversely, let $(\mu_{s^{(n)}})_{n\in\N}$ be a tight sequence. Assume by contradiction that $\sup_{n\in\N} s_1^{(n)} =\infty$. Then we may find a sequence $n_1< n_2 < \cdots$ of positive integers, such that $ \lim_{k\to\infty} s_1^{(n_k)} = \infty$ and there exists $\mu\in \inv(\M)$ with
$$
 \mu_{s^{(n_k)}} \Longrightarrow \mu. 
$$
By independence between $C_m(G)$ and $\O$,  we have
\begin{align*}
&\widehat{\mu}_{s^{(n_k)}}(D_\lambda)  = \E\Big[\exp(i \cdot \sum_{l, j =1}^m \lambda_l s_j^{(n_k)}  g_{lj} \O_{j l})\Big]
\\
=& \E\Big[\prod_{l, j=1}^m \exp\Big(- \frac{\lambda_l^2 (s_j^{(n_k)})^2 (\O_{j l})^2}{2}\Big)\Big] 
\\
 =&  \E\Big[ \exp\Big(- \sum_{l, j=1}^m  \frac{ \lambda_l^2 (s_j^{(n_k)})^2 (\O_{j l})^2}{2}\Big)\Big] \le  \E\Big[ \exp\Big(-   \frac{ \lambda_1^2 (s_1^{(n_k)})^2 (\O_{11})^2}{2}\Big)\Big]. 
\end{align*}
Since $\O_{11} \ne 0$ a.s. and $\lim_{k\to\infty} s_1^{(n_k)} = \infty$,  for any $\lambda$ such that  $\lambda_1 > 0$, we have 
\begin{align*}
\widehat{\mu}(D_\lambda)  = \lim_{k\to\infty} \widehat{\mu}_{s^{(n_k)}}(D_\lambda)  = 0.
\end{align*}
By the uniform continuity of the Fourier transform $\widehat{\mu}(D_\lambda)$, we  get $\widehat{\mu}(D_{(0, \cdots, 0)})  = 0$. This obviously contradicts to the elementary fact that  $\widehat{\mu}(D_{(0, \cdots, 0)}) =1$. Hence  we must have $\sup_{n\in\N} s_1^{(n)} <\infty$. 

The proof of Proposition \ref{prop-tight} is completed. 
\end{proof}

\begin{cor}\label{cor-homeo}
The map $s \mapsto \mu_s$ induces a homeomorphism between $\Delta$ and $\{\mu_s: s\in \Delta\}$. 
\end{cor}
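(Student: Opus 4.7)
The plan is to establish the three standard properties for a homeomorphism onto image: (i) injectivity of $s\mapsto\mu_s$, (ii) continuity of $s\mapsto\mu_s$, and (iii) continuity of the inverse map on the image $\{\mu_s:s\in\Delta\}$. Injectivity is immediate from Proposition \ref{prop-sing}, since mutually singular probability measures are distinct.

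For continuity of the forward map, I would invoke the closed-form expression
\[
\widehat{\mu}_s(D_\lambda) \;=\; \E\Big[\exp\Big(-\sum_{l,j=1}^m \tfrac{\lambda_l^2 s_j^{2} \O_{jl}^{2}}{2}\Big)\Big]
\]
derived in the course of the proof of Proposition \ref{prop-tight}. This Fourier transform is jointly continuous in $(s,\lambda)\in\Delta\times\Delta$ and, when $s$ ranges over a bounded subset of $\Delta$, the convergence $\widehat{\mu}_{s^{(n)}}(D_\lambda)\to\widehat{\mu}_s(D_\lambda)$ is uniform in $\lambda$ on compact subsets of $\Delta$. Combining this with Lemma \ref{lem-diag} yields $\mu_{s^{(n)}}\Longrightarrow\mu_s$ whenever $s^{(n)}\to s$ in $\Delta$.

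The decisive step is (iii), and this is where Proposition \ref{prop-tight} earns its keep. Suppose $\mu_{s^{(n)}}\Longrightarrow \mu_s$. A weakly convergent sequence is tight, so Proposition \ref{prop-tight} forces $\sup_n s_1^{(n)}<\infty$, and hence the sequence $(s^{(n)})$ is bounded in $\R^m$. Let $(s^{(n_k)})$ be any convergent subsequence with limit $t\in\Delta$. Forward continuity from (ii) gives $\mu_{s^{(n_k)}}\Longrightarrow\mu_t$, while by hypothesis $\mu_{s^{(n_k)}}\Longrightarrow\mu_s$, so $\mu_t=\mu_s$, and (i) forces $t=s$. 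Every convergent subsequence of the bounded sequence $(s^{(n)})$ therefore has the same limit $s$, so $s^{(n)}\to s$ in $\Delta$.

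No step here presents a genuine obstacle; the argument is the standard ``continuous injection plus a properness-type control gives a homeomorphism onto image'' template. The only substantive point is recognizing that Proposition \ref{prop-tight} is precisely the properness statement needed: tightness of $(\mu_{s^{(n)}})$ is equivalent to boundedness of the parameters $s^{(n)}$, and this is what allows the injectivity from Proposition \ref{prop-sing} to promote continuity of the forward map into a two-sided topological equivalence.
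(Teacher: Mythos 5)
Your proposal is correct and follows essentially the same route as the paper: injectivity from the mutual singularity in Proposition \ref{prop-sing}, forward continuity from the explicit Fourier transform together with Lemma \ref{lem-diag}, and continuity of the inverse via the tightness criterion of Proposition \ref{prop-tight} combined with the standard bounded-sequence/unique-accumulation-point argument. No discrepancies worth noting.
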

\begin{proof}
From above, the map $s \mapsto \mu_s$ is a continous bijection between $\Delta$ and $\{\mu_s: s\in \Delta\}$. 
 We only need to show the converse map is also continuous. Assume that $(s^{(n)})_{n\in\N}$ is a sequence in $\Delta$ and $s^{(\infty)} \in \Delta$ such that $\mu_{s^{(n)}} \Longrightarrow \mu_{s^{(\infty)}}$. By Proposition  \ref{prop-tight}, we have $\sup_{n\in\N} s_1^{(n)} < \infty$. Since the set $\{s \in \Delta| s_1 \le \sup_{n\in\N} s_1^{(n)} \}$ is compact, we only need to show that the sequence $(s^{(n)})_{n\in\N}$ has a  unique accumulation point. Let $s'$ be any accumulation point of the sequence $(s^{(n)})_{n\in\N}$. Then there exists a subsequence $(s^{(n_k)})_{n\in\N}$ that converges to $s'$. By continuity of the map $s\mapsto \mu_s$, we have $\mu_{s^{(n_k)}} \Longrightarrow \mu_{s'}$. It follows that $\mu_{s'} = \mu_{s^{(\infty)}}$ and hence $s' = s^{(\infty)}$. Thus $s^{(\infty)}$ is the unique accumulation point of the sequence $(s^{(n)})_{n\in\N}$, as desired.  
\end{proof}

\subsection{Limit orbital measures}
Recall the definition \eqref{def-L}, we denote 
$$\mathscr{L}(\M) : =\mathscr{L}^{O(\infty) \times O(m)}(\M).$$

\begin{prop}\label{prop-incl}
$\mathscr{L}(\M)\subset \{\mu_s: s \in \Delta\}.$
\end{prop}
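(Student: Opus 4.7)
The plan is to identify any $\mu \in \mathscr{L}(\M)$ explicitly by computing the images of the orbital measures under the truncations $C_n$ using the singular value decomposition of the upper corners of $X$, and then invoking Borel's theorem (Proposition \ref{prop-trun}). Fix $X \in \M$ with $\mu_N := m_{O(N) \times O(m)}(X) \Longrightarrow \mu$, and for each $N \ge m$ write the SVD $C_N(X) = A_N D_{s^{(N)}} B_N$ with $A_N \in O(N)$, $B_N \in O(m)$, and $s^{(N)} = (s_1^{(N)}, \ldots, s_m^{(N)}) \in \Delta$ (with $D_{s^{(N)}}$ viewed as an $N \times m$ rectangular diagonal matrix as in Remark \ref{rem-diag}). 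Since $u \in O(N)$ only acts on the first $N$ rows of $X$, for $n \le N$ one has $C_n(uXv^{-1}) = u[n, N]\, C_N(X)\, v^{-1}$, where $u[n, N]$ denotes the upper $n \times N$ block of $u$. Substituting the SVD and setting $W := u A_N$, $V' := B_N v^{-1}$, which are independent Haar samples from $O(N)$ and $O(m)$ by left-invariance, yields the key distributional identity
\begin{equation}\label{eq-prop-incl-key}
(C_n)_* \mu_N \;=\; \mathcal{L}\bigl(W[n, m]\, D_{s^{(N)}}\, V'\bigr) \;=\; \mathcal{L}\bigl((\sqrt{N}\,W[n, m])\, D_{\tilde{s}^{(N)}}\, V'\bigr),
\end{equation}
where $\tilde{s}^{(N)} := s^{(N)}/\sqrt{N} \in \Delta$ and $W[n,m]$ is the upper $n \times m$ block of $W$.

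The hard step will be to show that $\sup_N \tilde{s}_1^{(N)} < \infty$. Since $\mu_N \Longrightarrow \mu$, the scalar family $R_N := [W[m,m]\, D_{s^{(N)}}\, V']_{11} = \sum_{j=1}^m W_{1j}\, s_j^{(N)}\, V'_{j1}$ is tight. Assume, towards a contradiction, that $\tilde{s}_1^{(N_k)} \to \infty$ along a subsequence, and fix $\delta > 0$ with $\PP((V'_{11})^2 > \delta) > 0$. Set $a^{(N)}_j := s_j^{(N)} V'_{j1}$. By the rotational invariance of the uniform measure on $S^{N-1}$, conditionally on $V'$,
\[
R_N \stackrel{d}{=} \|a^{(N)}\|\, \xi_N,
\]
where $\xi_N$ is distributed as one coordinate of the uniform measure on $S^{N-1}$; by Proposition \ref{prop-trun} applied with $S=1$, $\sqrt{N}\,\xi_N \Longrightarrow \mathcal{N}(0, 1)$. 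On the event $\{(V'_{11})^2 > \delta\}$, $\|a^{(N_k)}\|/\sqrt{N_k} \ge \sqrt{\delta}\,\tilde{s}_1^{(N_k)} \to \infty$, so for any fixed $K > 0$,
\[
\PP(|R_{N_k}| > K \mid V') = \PP\bigl(\sqrt{N_k}\,|\xi_{N_k}| > K\sqrt{N_k}/\|a^{(N_k)}\|\bigr) \longrightarrow 1.
\]
Integrating, $\liminf_k \PP(|R_{N_k}| > K) \ge \PP((V'_{11})^2 > \delta) > 0$, contradicting tightness of $(R_N)$.

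With boundedness in hand, the conclusion is routine. Pass to a subsequence with $\tilde{s}^{(N_k)} \to s \in \Delta$. Proposition \ref{prop-trun} gives $\sqrt{N_k}\,W[n, m] \Longrightarrow C_n(G)$, and, combined with the independence of $W$ and $V'$ and the continuous mapping theorem applied to $(M, V) \mapsto M\, D_s\, V$, turns \eqref{eq-prop-incl-key} into
\[
(C_n)_* \mu_{N_k} \Longrightarrow \mathcal{L}\bigl(C_n(G)\, D_s\, V'\bigr) = (C_n)_* \mu_s
\]
for every $n$. Combined with $(C_n)_* \mu_{N_k} \Longrightarrow (C_n)_* \mu$, which follows from $\mu_{N_k} \Longrightarrow \mu$ and continuity of $C_n$, this forces $(C_n)_* \mu = (C_n)_* \mu_s$ for all $n$, hence $\mu = \mu_s$. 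This establishes $\mathscr{L}(\M) \subset \{\mu_s : s \in \Delta\}$.
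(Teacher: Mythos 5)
Your proof is correct and follows essentially the same route as the paper: reduce the orbital measure via the singular value decomposition of the truncated matrix to $\sqrt{N}\,(\text{Haar block})\cdot D_{\tilde s^{(N)}}\cdot \mathcal{O}$, prove $\sup_N \tilde s_1^{(N)}<\infty$ by a tightness contradiction, and conclude with Borel's theorem after passing to a convergent subsequence of the normalized singular values. The only differences are cosmetic: you establish the boundedness claim by conditioning on $V'$ and using rotational invariance of the sphere (where the paper uses the Gram--Schmidt representation of the first row, Lemma \ref{lem-2-seq}, and a vanishing characteristic function), and you identify the limit through finite-dimensional marginals rather than through the Fourier transforms of Lemma \ref{lem-diag}; both variants are sound.
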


Let us postpone the proof of Proposition \ref{prop-incl} to the next section and proceed with the proof of Theorem \ref{thm-r}.  

\begin{proof}[Proof of Theorem \ref{thm-r}]
By Corollary \ref{cor-homeo}, it suffices to prove that 
\begin{align}\label{erg-list}
\erg(\M) =\{\mu_s: s \in \Delta\}.
\end{align}
 By Theorem \ref{Ver-thm} and Proposition \ref{prop-incl},  we  have $ \erg(\M) \subset \{\mu_s: s \in \Delta\}$. 
 Hence we only need to show that for any $s_0\in \Delta$, we have $\mu_{s_0}\in \erg(\M)$.  By Theorem \ref{Buf-thm}, there exists a Borel probability measure $\overline{\nu}_{s_0}$ on $\erg(\M)$, such that 
\begin{align*}
\mu_{s_0} = \int\limits_{\erg(\M)} \eta \, d\overline{\nu}_{s_0}(\eta). 
\end{align*}
Denote by $j$ the inclusion map  $j:  \erg(\M) \hookrightarrow \{\mu_s: s \in \Delta\}$. Let $\Delta_{\mathrm{erg}}$ be the subset of $\Delta$ such that 
$$
j ( \erg(\M)) =  \{\mu_s: s \in \Delta_{\mathrm{erg}}\}. 
$$
Then $\Delta_{\mathrm{erg}}$ is a Borel subset and by Corollary \ref{cor-homeo},  there exists a Borel probability measure  $\widetilde{\nu}_{s_0}$ on $\Delta_{\mathrm{erg}}$ such that 
\begin{align}\label{s-dec-s}
\mu_{s_0} = \int\limits_{ \Delta_{\mathrm{erg}}} \mu_s \, d \widetilde{\nu}_{s_0}(s). 
\end{align}
Recall the definition \eqref{def-As} of the subset $\mathcal{A}_{s} \subset \M$. By \eqref{dec-meaning}, the equality \eqref{s-dec-s} implies that 
\begin{align*}
 \mu_{s_0}(\mathcal{A}_{s_0}) = \int\limits_{ \Delta_{\mathrm{erg}}} \mu_s (\mathcal{A}_{s_0}) \, d \widetilde{\nu}_{s_0}(s), 
\end{align*}
which in turn implies that 
\begin{align*}
1 = \int\limits_{ \Delta_{\mathrm{erg}}} \1_{s  = s_0} d \widetilde{\nu}_{s_0}(s). 
\end{align*}
It follows that $ \widetilde{\nu}_{s_0} = \delta_{s_0}$, where $\delta_{s_0}$ is the Dirac measure  on the point $s_0$. Since $\widetilde{\nu}_{s_0}$ is a probability measure on $\Delta_{\mathrm{erg}}$, we must have $s_0\in  \Delta_{\mathrm{erg}}$. Hence we get the desired relation $\mu_{s_0}\in \erg(\M)$. The proof of Theorem \ref{thm-r} is completed. 
\end{proof}

\section{Limit orbital measures are $\mu_s$'s}

The following lemma will be used. 

\begin{lem}\label{lem-2-seq}
Let $(X_n)_{n\in \N}, (Y_n)_{n\in\N}$ be two sequences of complexed valued random variables defined on the same probability space. Assume that $X_n$ converges almost surely to $1$. Then $(X_nY_n)_{n\in\N}$ is tight if and only if $(Y_n)_{n\in\N}$ is tight.  
\end{lem}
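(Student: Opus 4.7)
The plan is to prove both implications of the stated equivalence separately, working from the standard characterization of tightness, namely that a sequence $(Z_n)$ is tight iff $\sup_n \PP(|Z_n| > M) \to 0$ as $M \to \infty$. The key preliminary observation is that almost-sure convergence $X_n \to 1$ implies convergence in probability, and in particular that the sequence $(X_n)$ itself is tight (being almost surely convergent to a finite limit).

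For the forward direction, assume $(Y_n)$ is tight. The decomposition $X_n Y_n = Y_n + (X_n - 1) Y_n$ reduces matters to showing that $(X_n - 1) Y_n$ is tight. Since $X_n - 1 \to 0$ in probability and $(Y_n)$ is tight, the standard fact that the product of a sequence converging to $0$ in probability and a tight sequence converges to $0$ in probability gives $(X_n - 1) Y_n \to 0$ in probability, hence tight. The sum of two tight sequences is tight, so $(X_n Y_n)$ is tight.

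For the reverse direction, assume $(X_n Y_n)$ is tight. Given $\varepsilon > 0$, first use the tightness of $(X_n Y_n)$ to choose $M > 0$ with $\sup_n \PP(|X_n Y_n| > M) < \varepsilon/2$, and then use $X_n \to 1$ in probability to find $N$ with $\PP(|X_n| < 1/2) < \varepsilon/2$ for all $n \ge N$ (via the elementary inclusion $\{|X_n| < 1/2\} \subset \{|X_n - 1| > 1/2\}$). On the event $\{|X_n| \ge 1/2\}$ the pointwise bound $|Y_n| \le 2 |X_n Y_n|$ holds, whence
\[
\PP(|Y_n| > 2M) \le \PP(|X_n| < 1/2) + \PP(|X_n Y_n| > M) < \varepsilon \qquad (n \ge N).
\]
For the finitely many initial indices $n < N$, each $Y_n$ is an almost surely finite random variable, so the threshold $2M$ may be enlarged to some $M'$ to absorb these cases uniformly.

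The principal subtlety, and the main obstacle, lies in the reverse direction: almost-sure convergence of $X_n$ to $1$ must be leveraged to produce a uniform-in-$n$ (for large $n$) lower bound $|X_n| \ge 1/2$ holding with high probability, together with a separate treatment of the finitely many initial indices, which rests on the a.s.\ finiteness of each individual $Y_n$. All remaining steps are routine manipulations with the definition of tightness.
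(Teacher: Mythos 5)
Your proof is correct and takes essentially the same route as the paper's, whose entire argument is the pair of union bounds $\PP(|X_nY_n|\ge C)\le \PP(|X_n|\ge\sqrt{C})+\PP(|Y_n|\ge\sqrt{C})$ and $\PP(|Y_n|\ge C)\le \PP(|X_nY_n|\ge\sqrt{C})+\PP(|X_n|\le 1/\sqrt{C})$: your reverse direction is the second bound with the fixed threshold $1/2$ in place of $1/\sqrt{C}$, and your forward direction replaces the first bound by an equivalent Slutsky-type decomposition $X_nY_n=Y_n+(X_n-1)Y_n$. You are in fact slightly more careful than the paper in handling the finitely many initial indices $n<N$ (where only a.s.\ finiteness of each $Y_n$ is available), a point the paper's one-line proof leaves implicit.
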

\begin{proof}
This is an immediate consequence of the following inequalities:
\begin{align*}
\PP(|X_n Y_n| \ge C)  &\le \PP(|X_n| \ge \sqrt{C}) + \PP (| Y_n| \ge \sqrt{C});
\\
\PP(|Y_n|\ge C)  & \le \PP(|X_nY_n| \ge \sqrt{C}) + \PP(|X_n| \le \frac{1}{\sqrt{C}}). 
\end{align*}
\end{proof}

 For simplifying notation, in what follows, given $n\in\N$ and $X\in\M$, we denote
\begin{align*}
m_n(X): = m_{O(n) \times O(m)} (X).
\end{align*}
\begin{proof}[Proof of Proposition \ref{prop-incl}]
 By Lemma \ref{lem-diag}, it suffices to show that for any $\mu\in \mathscr{L}(\M)$, there exists $s\in\Delta$ such that for any  $\lambda\in\Delta$, we have
 $$\widehat{\mu}(D_\lambda)= \widehat{\mu}_s(D_\lambda).$$
 By definition of $\mathscr{L}(\M)$, there exists $X\in\M$, such that $m_n(X) \Longrightarrow\mu$.  It follows that 
 \begin{align}\label{exist-cv}
 \widehat{\mu}(D_\lambda) = \lim_{n\to\infty} \widehat{m_n(X) }(D_\lambda). 
 \end{align}
 Moreover, the convergence is uniform when $\lambda$ ranges over any compact subsets.   Let $Z^{(n)}$ and $\O$ be two  independent  random matrices sampled uniformly from $O(n)$ and $O(m)$ respectively. We have 
 \begin{align*}
  \widehat{m_n(X) }(D_\lambda) & = \E\Big[\exp\Big(i \tr ( D_\lambda C_m(Z^{(n)} X \O) \Big) \Big]. 
 \end{align*}
 For fixed $n$, by the $O(n)$-invariance of $Z^{(n)}$ and $O(m)$-invariance of $\O$, there exists $s^{(n)} \in \Delta$, such that 
 \begin{align*}
 \E\Big[\exp(i \tr ( D_\lambda C_m(Z^{(n)} X \O) ) \Big] =   \E\Big[\exp\Big(i \tr ( D_\lambda C_m(\sqrt{n}Z^{(n)} \left[\begin{array}{c} D_{s^{(n)}}\\ 0  \end{array}\right] \O) \Big) \Big]. 
 \end{align*}
   {\flushleft \bf Claim:} $\sup_{n\in\N} s_1^{(n)} < \infty$. 

 Assume by contradiction there exists a subsequence $(s_1^{(n_k)})_{k\in\N}$ such that $\lim_{k\to\infty} s_1^{(n_k)}= \infty$.  Using the truncation notation $Z^{(n)}[m]$ introduced in \S \ref{sec-trunc}, we have 
 \begin{align}\label{cv-fourier}
  \widehat{m_n(X) }(D_\lambda)  = \E\Big[\exp(i \tr ( D_\lambda \cdot \sqrt{n}Z^{(n)}[m] \cdot D_{s^{(n)}} \O ) \Big]. 
 \end{align}
  Take now $\lambda = (\lambda_1, 0, \cdots, 0)$. We may assume that the transposition of $Z^{(n)}$ is produced as in \S \ref{sec-haar}, that is, $Z^{(n)}$ is the random matrix obtained by the Gram-Schimidt operation with respect to {\it rows} from a Gaussian random matrix $G_n = [g_{lj}]_{1\le l, j \le n}$.  Then 
 \begin{align*}
  \widehat{m_n(X) }(D_\lambda)  = \E \Big[  i \lambda_1 \sqrt{\frac{n}{\sum_{j=1}^n g_{1j}^2}}  \sum_{j=1}^m g_{1j} s_j^{(n)} \O_{j1}\Big]. 
    \end{align*}
 The uniform convergence \eqref{exist-cv}  on any compact subsets implies that the limit
 \begin{align*}
 \lim_{k\to\infty}\E \Big[  i \lambda_1 \sqrt{\frac{n_k}{\sum_{j=1}^{n_k} g_{1j}^2}}  \sum_{j=1}^m g_{1j} s_j^{(n_k)} \O_{j1}\Big]
 \end{align*}
 exists and the convergence is uniform when $\lambda_1$  ranges over any compact subsets of $[0, \infty)$ and hence by symmetry of the Gaussian distribution, on any compact subset of $\R$. It follows that the following sequence 
 \begin{align*}
 \Big(\sqrt{\frac{n_k}{\sum_{j=1}^{n_k} g_{1j}^2}}  \sum_{j=1}^m g_{1j} s_j^{(n_k)} \O_{j1}\Big)_{k\in\N}
\end{align*}
is tight. By the strong law of large numbers, we have 
$$\sqrt{\frac{n_k}{\sum_{j=1}^{n_k} g_{1j}^2}} \xrightarrow[k\to\infty]{a.s.} 1.$$
Thus we may apply Lemma \ref{lem-2-seq} to conclude that  the following sequence 
 \begin{align*}
 \Big( \sum_{j=1}^m g_{1j} s_j^{(n_k)} \O_{j1}\Big)_{k\in\N}
\end{align*}
is also tight. It follows,  passing to a further subsequence if necessary, that there exists a probability measure $\sigma$ on $\R$ such that 
\begin{align*}
\sum_{j=1}^m g_{1j} s_j^{(n_k)} \O_{j1} \xrightarrow[k\to\infty]{\text{ in distribution}} \sigma. 
\end{align*}
In particular,   for any $\lambda_1 \in \R$,  we have 
\begin{align*}
\widehat{\sigma}(\lambda_1) = \E \Big[i \lambda_1 \sum_{j=1}^m g_{1j} s_j^{(n_k)} \O_{j1} \Big]  = \E\Big[\exp\Big(-\lambda_1\sum_{j=1}^m \frac{(s_j^{(n_k)})^2  (\O_{j1})^2}{2}\Big)\Big]. 
\end{align*}
Hence 
\begin{align*}
0\le & \widehat{\sigma}(\lambda_1)  \le \E\Big[\exp\Big(-\lambda_1 \frac{(s_1^{(n_k)})^2  (\O_{11})^2}{2}\Big)\Big]. 
\end{align*}
Since $\O_{11}\ne 0$ a.s. and by assumption $\lim_{k\to\infty} s_1^{(n_k)} =\infty$, we may apply bounded convergence theorem to conclude that 
\begin{align*}
\widehat{\sigma}(\lambda_1) = 0, \text{ for all $\lambda_1\in\R$. }
\end{align*} 
This contradicts to the fact that $\sigma$ is a probability measure on $\R$. Hence we must have $\sup_{n\in\N} s_1^{(n)} < \infty$. 

Now since $\{s \in \Delta| s_1 \le \sup_{n\in\N} s_1^{(n)} \}$ is compact, we may assume that there exists a subsequence $(s^{(n_k)})_{k\in\N}$ converges to a point $s^{(\infty)}\in\Delta$. Taking Proposition \ref{prop-trun} into account, the equalities \eqref{exist-cv} and \eqref{cv-fourier} now imply 
\begin{align*}
\widehat{\mu}(D_\lambda) &= \lim_{k\to\infty}  \E\Big[\exp(i \tr ( D_\lambda \cdot \sqrt{n_k}Z^{(n_k)}[m] \cdot D_{s^{(n_k)}} \O ) \Big] 
\\
& =  \E\Big[\exp(i \tr ( D_\lambda \cdot G_m \cdot D_{s^{(\infty)}} \O ) \Big] 
\\
&=  \E\Big[\exp(i \tr ( D_\lambda \cdot C_m(G D_{s^{(\infty)}} \O) ) \Big]. 
\end{align*} 
By definition of the probability measure $\mu_{s^{(\infty)}}$, we get 
\begin{align*}
\widehat{\mu}(D_\lambda) = \widehat{\mu}_{s^{(\infty)}}(D_\lambda), \text{for all $\lambda\in\Delta$. }
\end{align*}
Hence the proof of Proposition \ref{prop-incl} is completed.
\end{proof}

%%%%%%%%%
%%%%%%%%%
%%%%%%%%%
%%%%%%%%%
%%%%%%%%%
%%%%%%%%%
%%%%%%%%%
%%%%%%%%%
%%%%%%%%%
%%%%%%%%%
%%%%%%%%%
%%%%%%%%%
%%%%%%%%%
%%%%%%%%%

%%%%%%%%%
%%%%%%%%%
%%%%%%%%%
%%%%%%%%%
%%%%%%%%%
%%%%%%%%%
%%%%%%%%%
%%%%%%%%%
%%%%%%%%%
%%%%%%%%%
%%%%%%%%%
%%%%%%%%%
%%%%%%%%%
%%%%%%%%%
%%%%%%%%%

%
%
%\bibliography{mybib}
%\bibliographystyle{plain}
%

\def\cprime{$'$} \def\cydot{\leavevmode\raise.4ex\hbox{.}} \def\cprime{$'$}

\end{document}